\theoremstyle{plain}
\newtheorem{theorem}{Theorem}[section]
\newtheorem{lemma}{Lemma}[section]
\theoremstyle{definition}
\newtheorem{definition}{Definition}[section]
\newtheorem{remark}{\textnormal{\textbf{Remark}}}
\theoremstyle{remark}
\numberwithin{equation}{section}
\begin{document}

\title[3. Infinitesimal symmetries]%
{On the internal approach to differential equations\\ 3. Infinitesimal symmetries}
\author[Veronika Chrastinov\'a \and V\'aclav Tryhuk]%
{Veronika Chrastinov\'a \and V\'aclav Tryhuk}

\newcommand{\acr}{\newline\indent}

\address{Brno University of Technology\acr
Faculty of Civil Engineering\acr
Department of Mathematics\acr
Veve\v{r}\'{\i} 331/95, 602 00 Brno\acr
Czech Republic}

\email{chrastinova.v@fce.vutbr.cz, tryhuk.v@fce.vutbr.cz}

\thanks{This paper was elaborated with the financial support of the European
Union's "Operational Programme Research and Development for
Innovations", No. CZ.1.05/2.1.00/03.0097, as an activity of the
regional Centre AdMaS "Advanced Materials, Structures and
Technologies".}

\subjclass[2010]{54A17, 58J99, 35A30}

\keywords{higher--order transformations, variation, infinitesimal symmetry, diffiety, standard basis}

\begin{abstract}
The geometrical theory of partial differential equations in the absolute sense, without any additional structures, is developed. In particular the symmetries need not preserve the hierarchy of independent and dependent variables. The order of derivatives can be changed and the article is devoted to the higher--order infinitesimal symmetries which provide a~simplifying "linear aproximation" of general groups of higher--order symmetries. The classical Lie's approach is appropriately adapted.
\end{abstract}

\maketitle

\section{Preface}\label{sec1}
If the invertible higher--order transformations of differential equations are accepted as a~reasonable subject, the common Lie--Cartan's methods are insufficient for complete solution of the symmetry problem.
We recall that even the structure of all higher--order symmetries of the trivial (empty) systems of differential equations (that is, of the infinite--order jet spaces without any differential constraits) is unknown \cite {T1,T2,T3}. The same can be said for the "linearized theory" of the higher--order infinitesimal transformations treated in this article. 

Let us outline the core of the subject. We start with surfaces
\[w^1=w^1(x_1,\ldots,x_n), \ldots, w^m=w^m(x_1,\ldots,x_n)\]
lying in the space $\mathbb R^{m+n}$ with coordinates $x_1,\ldots,x_n,w^1,\ldots,w^m.$ The \emph{higher--order transformations} are defined by formulae
\begin{equation}\label{e1.1}\begin{array}{c} \bar x_i=W_i(\cdot\cdot,x_{i'},w^{j'}_I,\cdot\cdot), \bar w^j=W^j(\cdot\cdot,x_{i'},w^{j'}_I,\cdot\cdot)\\
(i,i'=1,\ldots,n;\, j,j'=1,\ldots,m)\end{array} \end{equation}
where the given smooth functions $W_i,W^j$ depend on the independent variables $x_1,\ldots,x_n$ and a~finite number of jet variables
\[w^j_I=\frac{\partial^{|I|}w^j}{\partial x_I}=\frac{\partial^{i_1+\cdots +i_r}w^j}{\partial x_{i_1}\cdots\partial x_{i_r}}\quad
(j=1,\ldots,m;\, i_1,\ldots,i_r=1,\ldots,n;\, r=0,1,\ldots\,).\]
The resulting surface
\[\bar w^1=\bar w^1(\bar x_1,\ldots,\bar x_n), \ldots, \bar w^m=\bar w^m(\bar x_1,\ldots,\bar x_n)\]
again lying in $\mathbb R^{m+n}$ appears as follows. We put
\begin{equation}\label{e1.2}
\bar x_i=W_i(\cdot\cdot,x_{i'},\frac{\partial^{|I|}w^{j'}}{\partial x_I}(x_1,\ldots,x_n),\cdot\cdot)=\bar x_i(x_1,\ldots,x_n)
\end{equation}
and assuming
\begin{equation}\label{e1.3}
\det\left(\frac{\partial \bar x_i}{\partial x_{i'}}\right)=\det\left(D_{i'}W_i\right)\neq 0\quad (D_i=\frac{\partial}{\partial x_i}+\sum w^j_{Ii}\frac{\partial}{\partial w^j_I}), \end{equation}
there exists the smooth inversion $x_i=x_i(\bar x_1,\ldots,\bar x_n)$ of the implicit function system (\ref{e1.2}) where $i=1,\ldots,n.$ This provides the result
\[\bar w^j=\bar w^j(\bar x_1,\ldots,\bar x_n)=W^j(\cdot\cdot,x_{i'}(\bar x_1,\ldots,\bar x_n),\frac{\partial^{|I|}w^{j'}}{\partial x_I}(\cdot\cdot,x_i(\bar x_1,\ldots,\bar x_n),\cdot\cdot),\cdot\cdot).\]
One can also obtain certain prolongation formulae
\[\bar w^j_I=\frac{\partial^{|I|}\bar w^j}{\partial\bar x_I}=W^j_I(\cdot\cdot,x_{i'},w^{j'}_{I'},\cdot\cdot)\quad (j,I \text{ as above})\]
for the derivatives by resolving the recurrence
\begin{equation}\label{e1.4}
\sum W^j_{Ii'}\,D_{i'}W_i=\sum D_iW^j_I.
\end{equation}
Functions $W_i$ satisfying (\ref{e1.3}) and $W^j$ may be arbitrary here.
It is however not easy to describe all \emph{invertible} transformations (\ref{e1.1}) and even more, to investigate the higher--order symmetries of differential equations.
So we recall the ancient infinitesimal version
\[\bar x_i=x_i+\varepsilon z_i(\cdot\cdot,x_{i'},w^{j'}_I,\cdot\cdot),\ \bar w^j=w^j+\varepsilon z^j(\cdot\cdot,x_{i'},w^{j'}_I,\cdot\cdot)\]
of formulae (\ref{e1.1}) with a~"small parameter $\varepsilon$". Then the invertibility mod $\varepsilon^2$ is trivially ensured by the change of $\varepsilon$ into $-\varepsilon.$ Alas, if we pass to rigorous exposition, quite other difficulties not occuring in the classical finite--dimensional theory appear.

Let us introduce the infinite--dimensional space $\mathbf M(m,n)$ with coordinates
\begin{equation}\label{e1.4a}
x_i,\ w^j_I\quad (j=1,\ldots,m;\, I=i_1\cdots i_r;\, i,i_1,\ldots,i_r=1,\ldots,n; r=0,1,\ldots \,)
\end{equation}
($I$ may be permuted) supplied with the module $\Omega(m,n)$ of \emph{contact forms}
\begin{equation}\label{e1.4b}
\omega =\sum a^j_I\omega^j_I\quad (\text{finite sum}, \omega^j_I=\mbox{d}w^j_I-\sum w^j_{Ii}\mbox{d}x_i).
\end{equation}
We are interested in the vector fields
\begin{equation}\label{e1.4c}
Z=\sum z_i\frac{\partial}{\partial x_i}+\sum z^j_I\frac{\partial}{\partial w^j_I}\quad (\text{infinite sum, arbitrary coefficients})
\end{equation}
such that $\mathcal L_Z\Omega(m,n)\subset\Omega(m,n)$ holds true for the Lie derivative $\mathcal L_Z.$
(Roughly saying, the contact forms are preserved after infinitesimal $Z$--shifts.) The inclusion is equivalent to the congruence
\[\mathcal L_Z\omega^j_I=Z\rfloor\mbox{d}\omega^j_I+\mbox{d}\,\omega^j_I(Z)=Z\rfloor\sum \mbox{d}x_i\wedge\omega^j_{Ii}+\mbox{d}\,\omega^j_I(Z)\cong 0\quad (\text{mod }\Omega(m,n))\]
which immediately gives the recurrence condition
\begin{equation}\label{e1.5} \omega^j_{Ii}(Z)=D_i\omega^j_I(Z)\quad \text{hence}\quad z^j_{Ii}=D_iz^j_I-\sum w^j_{Ii'}D_iz_{i'}\ ,
\end{equation}
this is the infinitesimal version of clumsy formulae (\ref{e1.4}).
With this preparation, we can eventually turn to the main topic.

The vector fields $Z$ satisfying $\mathcal L_Z\Omega(m,n)\subset\Omega(m,n)$ are called \emph{generalized} (or \emph{Lie--B\"acklund}) \emph{infinitesimal symmetries} of the jet space $\mathbf M(m,n)$ in actual literature.
However such $Z$ need not generate any Lie group which is in contradiction with the congenial classical point of view. So we prefer the shorter term \emph{variation} $Z$ in this case \cite{T4}. The \emph{infinitesimal symmetries} ensure the existence of a~\emph{true Lie group} in our conception.

An~infinitely prolonged system of differential equations
\begin{equation}\label{e1.5a}
D_{i_1}\cdots D_{i_r}f^k=0\quad (k=1,\ldots,K;\, i_1,\ldots,i_r=1,\ldots,n;\,r=0,1,\ldots\,)
\end{equation}
can be regarded as a~subspace $\mathbf M\subset\mathbf M(m,n).$ This is the \emph{external approach}, the reasonings are firmly localized in the ambient space $\mathbf M(m,n).$ Every~vector field~$Z$ tangent to $\mathbf M$ admits the natural restriction $Y$ to $\mathbf M.$ If $Z$ tangent to $\mathbf M$ is moreover a~variation in the ambient space, we speak of \emph{external variation} $Y$ of differential equations. 
 Let $\Omega$ be the restriction of module $\Omega(m,n)$ to $\mathbf M.$ If $Z$ is a~variation then clearly $\mathcal L_Y\Omega\subset\Omega$ and conversely, a~vector field $Y$ on $\mathbf M$ satisfying $\mathcal L_Y\Omega\subset\Omega$ can always be extended to a~variation~$Z$ (use the recurrence (\ref{e1.5}) restricted to $\mathbf M$).
So we may speak of \emph{internal variation}~$Y$ of differential equations as well and there is no essential distinction between external and internal concepts. 
Quite analogously, the infinitesimal symmetry $Z$ tangent moreover to $\mathbf M$ leads to \emph{external infinitesimal symmetry}~$Y$ of differential equations.
Let however $Y$ be a~vector field on~$\mathbf M$ such that $\mathcal L_Y\Omega\subset\Omega$ and let $Y$ \emph{generate a~Lie group on} $\mathbf M.$ Then we speak of \emph{internal infinitesimal symmetry}. Such $Y$ can always be extended into a~variation~$Z$ but this extension need not generate a~Lie group on the ambient space, i.e., $Y$ \emph{need not be} the external symmetry.

We are interested in the \emph{internal theory} in this article. For this aim, the space $\mathbf M$ equipped with module $\Omega$ will be characterized without any use of the localization in the ambient space $\mathbf M(m,n).$
\section{Fundamental concepts}\label{sec2}
Though we develop \cite{T4,T5,T6}, the exposition is made selfcontained. All fundamental concepts are of the global nature, however, we deal with the local theory, that is, the definition domains are not discussed. No advanced technical tools are needed. We deal with modules of differential forms and vector fields together with the elementary algebra. The existence of bases of various modules to appear is tacitly postulated. A~certain novelty lies in the use of the infinite--dimensional manifolds, however, they are of a~classical nature without any functional analysis and norm estimates. 

Let $\mathbf M$ be a~smoth manifold \emph{modelled on} $\mathbb R^\infty,$ that is, there are \emph{coordinates} $h^i:\mathbf M\rightarrow\mathbb R$ $(i=1,2,\ldots\,)$ and the \emph{structural algebra} $\mathcal F$ of functions $f:\mathbf M\rightarrow\mathbb R$ expressible as the smooth composite $f=f(h^1,\ldots,h^{m(f)})$ in terms of coordinates. Then $\Phi$ denotes the $\mathcal F$--module of differential forms $\varphi=\sum f^i\mbox{d}g^i$ (finite sum with $f^i,g^i\in\mathcal F$) and $\mathcal T$ denotes the $\mathcal F$--module of vector fields $Z.$ They are regarded as $\mathcal F$--linear functions on $\mathcal F$--module $\Phi,$ i.e., we have $\mathcal F$--linear functions
\[\varphi(Z)=Z\rfloor\varphi=\sum f^i\text{d}g^i(Z)=\sum f^iZg^i\in\mathcal F\quad (Z\in\mathcal T \text{fixed})\]
of variable $\varphi\in\Phi.$

If $\varphi_1,\varphi_2,\ldots\,$ is a~basis of $\mathcal F$--module $\Phi,$ then the values $z^i=\varphi^i(Z)$ can be arbitrarily prescribed which is denoted by
\begin{equation}\label{e2.1}
Z=\sum z^j\frac{\partial}{\partial\varphi^j}\quad (\text{infinite sum}, z^j=\varphi^j(Z))\,.
\end{equation}
If in particular $\varphi^j=\mbox{d}h^j$ are differentials of coordinates, the well--known series
\[Z=(\sum z^j\frac{\partial}{\partial\mbox{d}h^j}=)\sum z^j\frac{\partial}{\partial h^j}\quad (\text{infinite sum}, z^j=Zh^j)\]
(abbreviation of notation) appears as a~particular subcase.
\begin{definition}\label{def2.1}
A~submodule $\Omega\subset\Phi$ of a~finite codimension $n=n(\Omega)$ is called a~\emph{diffiety} if there exists filtration $\Omega_*:\Omega_0\subset\Omega_1\subset\cdots\subset\Omega=\cup\Omega_l$ by finite--dimensional submodules $\Omega_l\subset\Omega$ $(l=0,1\ldots\,)$ satisfying
\begin{equation}\label{e2.2}
\mathcal L_\mathcal H\Omega_l\subset\Omega_{l+1}\quad (\text{all }l),\quad \Omega_l+\mathcal L_\mathcal H\Omega_l=\Omega_{l+1}\quad (l\text{ large enough}), \end{equation}
the so--called \emph{good filtration}. Here $\mathcal H=\mathcal H(\Omega)\subset\mathcal T$ is the submodule of all vector fields $Z$ such that $\Omega(Z)=0.$
\end{definition}
Diffieties $\Omega$ exactly correspond to the infinitely prolonged general systems of partial differential equations in the \emph{absolute sense}, i.e., without any additional structure. They realize the ancient E. Cartan's dream of the autonomous and coordinate--free theory in surprisingly simple and clear manner.

The technical concept of \emph{dependent variables} can be related to the choice of the filtration (\ref{e2.2}), see examples below and also the discussion in \cite{T4} for the particular case $n=n(\Omega)=1.$ The technical concept of \emph{independent variables} provides the link to the contact forms.
\begin{definition}\label{def2.2}
Functions $x_1,\ldots,x_n\in\mathcal F$ $(n=n(\Omega))$ are called \emph{independent variables} for the diffiety $\Omega$ if differentials $\mbox{d}x_1,\ldots,\mbox{d}x_n$ together with $\Omega$ generate the module $\Phi.$
\end{definition}
It follows that every form $\varphi\in\Phi$ admits a~unique representation
\begin{equation}\label{e2.3}
\varphi=\sum f^i\mbox{d}x_i+\omega\quad (f^i\in\mathcal F, \omega\in\Omega).
\end{equation}
The vector fields $D_1,\ldots,D_n\in\mathcal H$ uniquely defined by $\varphi(D_i)=f^i$ are called \emph{total derivatives} to the independent variables $x_1,\ldots,x_n.$ If in particular $\varphi=\mbox{d}f$ $(f\in\mathcal F),$ formula (\ref{e2.3}) provides the well--known \emph{contact forms}
\begin{equation}\label{e2.4}
\mbox{d}f-\sum f^i\mbox{d}x_i=\omega_f\in\Omega\quad (f^i=D_if=\mbox{d}f(D_i))
\end{equation}
of the diffiety $\Omega.$
\begin{definition}\label{def2.3}
A~vector field $Z\in\mathcal T$ is a~\emph{variation} of diffiety $\Omega$ if $\mathcal L_Z\Omega\subset\Omega.$ If $Z$ moreover generates a~Lie group, we speak of \emph{infinitesimal symmetry}.
\end{definition}
Vector fields will be represented by series (\ref{e2.1}) and appropriate choice of the forms $\varphi^j$ will simplify the calculation of variations due to the following lemma.
\begin{lemma}\label{l2.1}
A~vector field $Z\in\mathcal T$ is a~variation of diffiety $\Omega$ if and only if
\begin{equation}\label{e2.5}
(\mathcal L_{D_i}\omega)(Z)=D_i\omega(Z)\quad (i=1,\ldots,n;\,\omega\in\Omega).
\end{equation}
In fact only forms $\omega$ of a~basis of $\Omega$ are sufficient.
\end{lemma}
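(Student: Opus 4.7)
The plan is to reduce the condition $\mathcal{L}_Z\omega\in\Omega$ to a pointwise test against the total derivatives $D_1,\dots,D_n$. The key observation is that by Definition \ref{def2.2} and formula (\ref{e2.3}), every form $\varphi\in\Phi$ decomposes uniquely as $\varphi=\sum f^i\mbox{d}x_i+\omega'$ with $\omega'\in\Omega$, and since $\omega'(D_j)=0$ while $\mbox{d}x_i(D_j)=\delta_{ij}$, one has $\varphi(D_j)=f^j$. Consequently $\varphi\in\Omega$ if and only if $\varphi(D_i)=0$ for all $i=1,\ldots,n$. Therefore to test whether $\mathcal L_Z\omega\in\Omega$ it suffices to evaluate $(\mathcal L_Z\omega)(D_i)$ for each $i$.

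Second, I would apply the Cartan magic formula twice. For $\omega\in\Omega$ and $D_i\in\mathcal H$ one has $\omega(D_i)=0$, so
\[
\mathcal L_{D_i}\omega=D_i\rfloor\mbox{d}\omega+\mbox{d}(\omega(D_i))=D_i\rfloor\mbox{d}\omega,
\]
whence $(\mathcal L_{D_i}\omega)(Z)=\mbox{d}\omega(D_i,Z)$. For $Z$ arbitrary,
\[
(\mathcal L_Z\omega)(D_i)=(Z\rfloor\mbox{d}\omega)(D_i)+\mbox{d}(\omega(Z))(D_i)=\mbox{d}\omega(Z,D_i)+D_i(\omega(Z)).
\]
Antisymmetry of $\mbox{d}\omega$ then gives the clean identity
\[
(\mathcal L_Z\omega)(D_i)=D_i(\omega(Z))-(\mathcal L_{D_i}\omega)(Z).
\]
Combining with the first step yields the equivalence: $\mathcal L_Z\omega\in\Omega$ for every $\omega\in\Omega$ iff $D_i(\omega(Z))=(\mathcal L_{D_i}\omega)(Z)$ for every $\omega\in\Omega$ and every $i$, which is precisely (\ref{e2.5}).

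Finally, for the last sentence of the lemma, I would check that the condition (\ref{e2.5}) is $\mathcal F$--linear in $\omega$, so that its validity on a basis propagates. If $\omega=\sum f^k\omega^k$ with $\omega^k$ a basis of $\Omega$, then by the Leibniz rule applied to both the Lie derivative and to the coordinate derivative,
\[
(\mathcal L_{D_i}\omega)(Z)-D_i\omega(Z)=\sum f^k\bigl[(\mathcal L_{D_i}\omega^k)(Z)-D_i\omega^k(Z)\bigr],
\]
the $D_if^k$--terms cancelling on both sides. Hence if (\ref{e2.5}) holds on the basis $\{\omega^k\}$ it holds for every $\omega\in\Omega$.

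I do not anticipate a serious obstacle here: the whole argument rests on the characterization of $\Omega$ as the annihilator of $D_1,\dots,D_n$ inside $\Phi$ (which is forced by Definitions \ref{def2.1} and \ref{def2.2}) and on the Cartan formula. The only point where one must be slightly careful is the use of $\omega(D_i)=0$, which relies on $D_i\in\mathcal H$; this is guaranteed by the definition of total derivatives following (\ref{e2.3}).
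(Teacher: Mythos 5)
Your proof is correct and is in essence the same as the paper's: both rest on Cartan's magic formula applied to $\mathcal L_Z\omega$ and $\mathcal L_{D_i}\omega$ together with the decomposition (\ref{e2.3}). You merely package the computation dually --- testing $\mathcal L_Z\omega$ against the total derivatives $D_i$ (using that $\Omega$ is exactly the annihilator of $D_1,\ldots,D_n$ and the antisymmetry of $\mathrm{d}\omega$) where the paper reduces $\mathrm{d}\omega$ and $\mathcal L_Z\omega$ modulo $\Omega$ --- and your explicit $\mathcal F$--linearity check for the basis statement is a welcome detail that the paper dismisses as trivial.
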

\begin{proof}
If $\omega\in\Omega$ then $\mbox{d}\omega\cong \sum \mbox{d}x_i\wedge\omega_i$ $(\text{mod }\Omega\wedge\Omega)$ 
for appropriate forms $\omega_i\in\Phi,$ however we infer that $\omega_i=\mathcal L_{D_i}\omega\in\Omega.$ If $Z\in\mathcal T$ is a~vector field, then
\[\mathcal L_Z\omega=Z\rfloor\mbox{d}\omega+\mbox{d}\,\omega(Z)\cong -\sum \omega_i(Z)\mbox{d}x_i+\sum D_i\omega(Z)\mbox{d}x_i\quad (\text{mod }\Omega)\]
by applying (\ref{e2.4}) to the function $f=\omega(Z)$ which implies (\ref{e2.5}). The last assertion of Lemma~\ref{l2.1} is trivial.
\end{proof}
Infinitesimal symmetries cause more difficulties. We can state the following general result \cite[Lemma 5.4, Theorems 5.6 and 11.1]{T7} without proof.
In examples to follow, simplified arguments will be enough.
\begin{lemma}\label{l2.2}
Let $\Gamma\subset\Omega$ be a~finite--dimensional submodule of diffiety~$\Omega$ such that $\Omega=\Gamma+\mathcal L_\mathcal H\Gamma+\mathcal L^2_\mathcal H\Gamma+\cdots$ and $Z$ be a~variation of~$\Omega.$ Then $Z$ generates a~Lie group (i.e., $Z$ is the infinitesimal symmetry of~$\Omega$) if and only if
\begin{equation}\label{e2.5a}
\mathcal L^{k+1}_Z\Gamma\subset \Gamma+\mathcal L_Z\Gamma+\cdots +\mathcal L_Z^k\Gamma
\end{equation}
 for appropriate $k$ large enough.
\end{lemma}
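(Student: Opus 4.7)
The plan is to take $\Gamma^{*}:=\Gamma+\mathcal L_Z\Gamma+\cdots+\mathcal L_Z^k\Gamma$ as the main object. Under hypothesis (\ref{e2.5a}) this is a finite--dimensional submodule of $\Omega$ satisfying $\mathcal L_Z\Gamma^{*}\subset\Gamma^{*}$, and it serves as the finite--dimensional core that carries a genuine Lie group action, from which the full action on $\mathbf M$ is recovered by prolongation.

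For necessity, suppose $Z$ generates a local one--parameter group $\{\Phi_t\}$. Since $Z$ is a variation, each $\Phi_t^{*}$ preserves $\Omega$, and for $\omega\in\Gamma$ the Taylor expansion $\Phi_t^{*}\omega=\sum_{k\geq 0}\frac{t^k}{k!}\mathcal L_Z^k\omega$ identifies the ``orbit module'' $\sum_{k\geq 0}\mathcal L_Z^k\Gamma$ with the $\mathcal F$--span of $\Phi_{t_1}^{*}\Gamma,\ldots,\Phi_{t_r}^{*}\Gamma$ for sufficiently many parameters $t_i$. The existence of the Lie group action forces this span to be a finite--dimensional submodule of $\Omega$, so the increasing chain $\Gamma\subset\Gamma+\mathcal L_Z\Gamma\subset\cdots$ stabilizes at some $k$, which is exactly (\ref{e2.5a}).

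For sufficiency, one selects a basis of $\Gamma^{*}$ consisting of contact forms $\omega_{f_1},\ldots,\omega_{f_N}$ of suitable functions $f_\alpha\in\mathcal F$ via (\ref{e2.4}) (possible up to a finite--dimensional adjustment using the independent variables $x_1,\ldots,x_n$). Then Lemma \ref{l2.1}, together with $\mathcal L_Z\Gamma^{*}\subset\Gamma^{*}$, forces the relations $Zf_\alpha=F_\alpha(x_1,\ldots,x_n,f_1,\ldots,f_N)$ for smooth functions $F_\alpha$: a closed finite--dimensional ODE system which integrates to a local Lie group $\Phi_t$ acting on the ``base'' carrying the coordinates $(x_i,f_\alpha)$. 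The rest of $\mathbf M$ is obtained, by the hypothesis $\Omega=\Gamma+\mathcal L_\mathcal H\Gamma+\cdots$, from repeated total derivatives $D_{i_1}\cdots D_{i_r}f_\alpha$, and the transformations $\Phi_t$ are extended to all of $\mathbf M$ by the prolongation procedure, infinitesimally encoded in the recurrence $z^j_{Ii}=D_iz^j_I-\sum w^j_{Ii'}D_iz_{i'}$ of (\ref{e1.5}).

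The hard part will be verifying that this prolongation is consistent and globally well defined, i.e.\ that the flow of $Z$ on the finite--dimensional base genuinely lifts, order by order, to a one--parameter group of diffeomorphisms of the infinite--dimensional $\mathbf M$. This reduces to checking that in the adapted jet--like coordinates $\{x_i,D_If_\alpha\}$ the components of $Z$ depend only on coordinates of bounded height, so the infinite ODE system for $\Phi_t$ decomposes into a countable hierarchy of finite--dimensional triangular blocks, each of which integrates completely because the base block does and because $\mathcal L_Z$ and $\mathcal L_{D_i}$ interact compatibly (the variation property). This technical layer is the real content of the lemma and is presumably what is treated in detail in \cite{T7}.
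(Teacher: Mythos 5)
First, a point of comparison: the paper does not actually prove Lemma~\ref{l2.2} --- it is stated explicitly ``without proof'' with a citation of \cite{T7} --- so there is no internal argument to measure your proposal against. Judged on its own terms, your proposal has the right overall shape: pass to the finite--dimensional $\mathcal L_Z$--invariant module $\Gamma^{*}=\Gamma+\mathcal L_Z\Gamma+\cdots+\mathcal L_Z^{k}\Gamma$, integrate a finite--dimensional system there, and recover the flow on all of $\mathbf M$ by prolongation using $\Omega=\Gamma+\mathcal L_{\mathcal H}\Gamma+\cdots$. This is indeed the philosophy of the cited results. But as written it contains genuine gaps in both directions.

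For necessity, the identity $\Phi_t^{*}\omega=\sum t^{k}\mathcal L_Z^{k}\omega/k!$ is a formal Taylor series; in the smooth (non--analytic) category it need not converge to $\Phi_t^{*}\omega$, and even granting it, the assertion that the $\mathcal F$--span of $\Phi_{t}^{*}\Gamma$ over small $t$ is finite--dimensional is exactly the point at issue: each $\Phi_t^{*}\Gamma$ is finite--dimensional, but nothing you say prevents its ``order'' from growing without bound as $t$ varies. That uniformity in $t$ is precisely what separates an infinitesimal symmetry from a mere variation and is the substance of the criterion in \cite{T7}; it cannot be waved through. For sufficiency, two steps are unsupported. (i) A finite--dimensional submodule $\Gamma^{*}\subset\Omega$ need not admit a basis of contact forms $\omega_{f_\alpha}$ of type (\ref{e2.4}); a general element of $\Omega$ is only an $\mathcal F$--combination of such forms. (ii) Even with such a basis, $\mathcal L_Z\omega_{f_\alpha}=\sum\lambda_{\alpha\beta}\omega_{f_\beta}$ only yields that $\mathrm{d}(Zf_\alpha)$ lies in the span of the $\mathrm{d}f_\beta$, the $\mathrm{d}x_i$ and the $\mathrm{d}(Zx_i)$; to conclude $Zf_\alpha=F_\alpha(x,f)$ you must also control the coefficients $z_i=Zx_i$, about which condition (\ref{e2.5a}) --- a statement about the contact module $\Omega$ only --- says nothing directly, and the paper explicitly warns (Remark~\ref{rem2}) that one may not simply normalize $Zx_i=0$. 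Finally, the consistency of the infinite prolongation, which you yourself identify as ``the real content of the lemma,'' is deferred entirely to \cite{T7} rather than argued. The proposal is therefore a plausible strategy outline consistent with the cited source, not a proof.
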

Recalling good filtration (\ref{e2.2}), one can choose $\Gamma=\Omega_l$ with $l$ large enough. 
We have introduced all fundamental concepts and technical tools for the subsequent exposition, however, three Remarks are still necessary for better clarity.

\begin{remark}\label{rem1}
Lemma~\ref{l2.1} replaces the vague condition $\mathcal L_Z\Omega\subset\Omega$ for the variation $Z$ with more effective condition (\ref{e2.5}). Though it is quite simple, the condition
\begin{equation}\label{e2.6} (\mathcal L_D\omega)(Z)=D\omega(Z)\quad (\omega\in\Omega,\, D\in\mathcal H)
\end{equation} clearly equivalent to (\ref{e2.5}) is still better. Paradoxically, it is only latently occuring in actual literature and we can refer to the ambitious exposition \cite[p. 107--113]{T8} which rests on rather special mechanism of "$\ell_\varphi$--linearization". The rule (\ref{e2.6}) involves this mechanism as a~particular subcase when $\omega=\omega_f\in\Omega$ is a~contact form and $Z=\frak Z$ is the \emph{evolutionary operator} satisfying the additional requirement $\frak Zx_i=0$ $(i=1,\ldots,n).$
It is also highly interesting to compare diffieties and symmetries \cite{T8} with our definitions \ref{def2.1}--\ref{def2.3}.
\end{remark}
\begin{remark}\label{rem2} The distinction between variations and infinitesimal symmetries is neglected in actual literature. For instance, clearly $\mathcal H\subset Var$ where $Var$ is the module of all variations, however, the factormodule $Sym=Var/\mathcal H$ introduced in \cite[pp. 9, 107]{T8} is a~confusing object in this respect since the class $[Z]\in Sym$ may involve both the true variations and the true infinitesimal symmetries. Moreover the frequent use of the evolutionary operator $\frak Z\in [Z]$ is not a~lucky measure since $\frak Z$ need not be the "best possible" element of the class $[Z]$ in the sense that only appropriate improvement $\frak Z+D\in [Z]$ $(D\in\mathcal H)$ may ensure a~true Lie group.
\end{remark}
\begin{remark}\label{rem3} If the underlying space $\mathbf M$ is of a~finite dimension, the above theory simplifies. The Frobenius theorem can be applied to the submodule $\Omega\subset\Phi$ and such a~diffiety $\Omega$ represents a~system of partial differential equations where the solution depends on a~finite number of constants.
\end{remark}
\section{One independent variable}\label{sec3}
The particular case \mbox{$n=n(\Omega)=1$} of ordinary differential equations was treated in~\cite{T4}.
Then \emph{all} variations can be determined by a~mere linear algebra but no finite algorithm is as yet available for the totality of \emph{all} infinitesimal symmetries except the systems of $m$ equations with $m+1$ unknown functions at most. We discuss the differential equation
\begin{equation}\label{e3.1}\frac{d^2u}{dx^2}=F(x,u,v,\frac{du}{dx},\frac{dv}{dx},\frac{d^2v}{dx^2})\qquad (u=u(x), v=v(x))\end{equation}
as a~transparent example here.

Passing to the diffiety, let us introduce the space $\mathbf M$ with coordinates \[x,u_0,u_1,v_r\qquad (r=0,1,\ldots)\]
and diffiety $\Omega$ with the basis
\begin{equation}\label{e3.2}
\alpha_0=\mbox{d}u_0-u_1\mbox{d}x,\ \alpha_1=\mbox{d}u_1-F\mbox{d}x,\ \beta_r=\mbox{d}v_r-v_{r+1}\mbox{d}x\quad (r=0,1,\ldots\,)
\end{equation}
where $F=F(x,u_0,v_0,u_1,v_1,v_2).$ The total derivative
\[D=\frac{\partial}{\partial x}+u_1\frac{\partial}{\partial u_0}+F\frac{\partial}{\partial u_1}+\sum v_{r+1}\frac{\partial}{\partial v_r}\in\mathcal H\]
clearly satisfies $\mathcal L_D\alpha_0=\alpha_1$ and
\begin{equation}\label{e3.3}
\mathcal L_D\alpha_1=\mbox{d}F-DF\mbox{d}x=F_{u_0}\alpha_0+F_{v_0}\beta_0+F_{u_1}\alpha_1+F_{v_1}\beta_1+F_{v_2}\beta_2,
\end{equation}
moreover $\mathcal L_D\beta_r=\beta_{r+1}.$
The order--preserving filtration $\Omega_*$ consists of submodules $\Omega_l\subset\Omega$ $(l=0,1,\ldots\,)$ generated by the forms $\alpha_r,\beta_r$ where $r\leq l.$
\subsection{The direct approach to variations}\label{ssec3.1}
Using (\ref{e2.5}) with $n=1$ and $D_1=D,$ we conclude that $Z$ is a~variation if and only if $\alpha_1(Z)=D\alpha_0(Z)$ and
\[D\alpha_1(Z)=F_{u_0}\alpha_0(Z)+F_{v_0}\beta_0(Z)+F_{u_1}\alpha_1(Z)+F_{v_1}\beta_1(Z)+F_{v_2}\beta_2(Z),\]
moreover $\beta_{r+1}(Z)=D\beta_r(Z).$ Assuming the development
\[Z=z\frac{\partial}{\partial x}+z^0\frac{\partial}{\partial u_0}+z^1\frac{\partial}{\partial u_1}+\sum z_r\frac{\partial}{\partial v_r}\quad (z=Zx,z^r=\alpha_r(Z),z_r=\beta_r(Z))\]
of the kind (\ref{e2.1}), we have the recurrences $z^1=Dz^0,z_{r+1}=Dz_r$ $(r=0,1,\ldots)$ together with the crucial requirement
\begin{equation}\label{e3.4}D^2z^0=F_{u_0}z^0+F_{v_0}z_0+F_{u_1}Dz^0+F_{v_1}Dz_0+F_{v_2}D^2z_0\end{equation}
for the initial coefficients $z_0$ and $z^0$ (coefficient $z$ is arbitrary). It is not easy to resolve equation (\ref{e3.4}) where the functions $z_0, z^0$ depend on finite but uncertain number of coordinates.
\subsection{Better approach}\label{ssec3.2}
We introduce the alternative basis $\pi_r$ $(r=0,1,\ldots)$ of diffiety $\Omega$ such that $\mathcal L_D\pi_r=\pi_{r+1},$ the \emph{standard basis} \cite {T4}. Then $Z$ is a~variation if and only if $\pi_{r+1}(Z)=D\pi_r(Z)$ and therefore the formula
\begin{equation}\label{e3.5} Z=z\frac{\partial}{\partial x}+\sum D^rp\frac{\partial}{\partial\pi_r}\qquad (p=\pi_0(Z)\text{ hence }D^rp=\pi_r(Z))\end{equation}
with arbitrary functions $z$ and $p$ provides all variations $Z.$ (Some "degenerate cases" are omitted here, see below.)

In order to obtain the standard basis, identity (\ref{e3.3}) will be reduced. Clearly
\[\mathcal L_D(\alpha_1-F_{v_2}\beta_1)=F_{u_0}\alpha_0+F_{v_0}\beta_0+F_{u_1}\alpha_1+(F_{v_1}-DF_{v_2})\beta_1.\]
Denoting $\alpha=\alpha_1-F_{v_2}\beta_1,$ the form $\alpha_1$ can be replaced with $\alpha$ in the original basis (\ref{e3.2}) and identity (\ref{e3.3}) simplifies as
\[\mathcal L_D\alpha=F_{u_0}\alpha_0+F_{v_0}\beta_0+F_{u_1}\alpha+A\beta_1\quad (A=F_{v_1}+F_{u_1}F_{v_2}-DF_{v_2}).\]
The last summand can be deleted as well. Clearly
\[\mathcal L_D(\alpha-A\beta_0)=F_{u_0}\alpha_0+(F_{v_0}-DA)\beta_0+F_{u_1}\alpha,\]
\[\mathcal L_D\alpha_0=\alpha_1=\alpha+F_{v_2}\beta_1\quad\text{hence}\quad \mathcal L_D(\alpha_0-F_{v_2}\beta_0)=\alpha-DF_{v_2}\beta_0.\]
Denoting $\beta=\alpha-A\beta_0, \gamma=\alpha_0-F_{v_2}\beta_0,$ both forms $\alpha$ and $\alpha_0$ can be replaced with $\beta$ and $\gamma.$ We have the basis
\begin{equation}\label{e3.6}
\beta=\alpha-A\beta_0,\ \gamma=\alpha_0-F_{v_2}\beta_0,\ \beta_r\qquad (r=0,1,\ldots\,)\end{equation}
satisfying
\[\mathcal L_D\beta=F_{u_0}(\gamma+F_{v_2}\beta_0)+F_{u_1}(\beta+A\beta_0)+(F_{v_0}-DA)\beta_0=F_{u_0}\gamma+F_{u_1}\beta+B\beta_0,\]
\[\mathcal L_D\gamma=\alpha-DF_{v_2}\beta_0=\beta+A\beta_0-DF_{v_2}\beta_0=\beta+C\beta_0\]
with certain coefficients $B$ and $C.$ Finally
\[\mathcal L_D(C\beta-B\gamma)=DC\beta-DB\gamma+C\mathcal L_D\beta-B\mathcal L_D\gamma=M\beta+N\gamma\]
with certain $M,N.$ We are done.
\subsection{The controllable subcase}\label{ssec3.3}
In general
\[\det\left(\begin{array}{rr}
C&-B\\
M&N
\end{array}\right)\neq 0.\]
Then we have the standard basis
\[\pi_0=C\beta-B\gamma,\ \pi_1=\mathcal L_D\pi_0=M\beta+N\gamma,\ \pi_r=\mathcal L_D^r\pi_0\quad (r=2,3,\ldots\,).\]
Indeed, the forms $\beta,\gamma$ are linear combinations of $\pi_0,\pi_1.$ Either $B\neq 0$ or $C\neq 0$ therefore $\beta_0$ can be expressed in terms of $\beta,\gamma,\mathcal L_D\beta,\mathcal L_D\gamma$ and therefore in terms of $\pi_0,\ldots,\pi_4.$ Then the forms $\beta_r=\mathcal L^r_D\beta_0$ are involved, too.
\subsection{On the noncontrollable subcase}\label{ssec3.4}
If $B=C=0$ identically then $\text{d}\beta=\text{d}\gamma\cong 0$ (mod $\beta,\gamma$) and $F=D^2G$ for appropriate $G=G(x,u_0,v_0).$ If either $B\neq 0$ or $C\neq 0$ but $\det\left(\begin{array}{rr}
C&-B\\
M&N
\end{array}\right)= 0$
then the form $C\beta-B\gamma$ is a~multiple of a~total differential and $F=DG$ where $G=G(x,u_0,v_0,u_1,v_1).$ We refer to~\cite{T4} for this "degenerate" case.
\subsection{Towards the infinitesimal symmetries}\label{ssec3.5}
While variations were obtained in full generality, the additional requirements of Lemma~\ref{l2.2} cannot be completely analyzed in full generality here due to a~limited space. We may refer to~\cite{T4} for the "simple" function $F=F(v_1).$ So we shall deal with analogous "easier" function.
\subsection{Survey of explicit formulae}\label{ssec3.6}
Let us suppose $F=u_0v_1$ from now on. In the direct approach, equation (\ref{e3.4}) simplifies as
\begin{equation}\label{e3.7}
D^2z^0=v_1z^0+u_0Dz_0\quad (D=\frac{\partial}{\partial x}+u_1\frac{\partial}{\partial u_0}+u_0v_1\frac{\partial}{\partial u_1}+\sum v_{r+1}\frac{\partial}{\partial v_r}). \end{equation}
We shall soon state the explicit solution.

Turning to the standard basis, one can see that
\[\alpha=\alpha_1,\beta=\alpha_1-u_0\beta_0,\gamma=\alpha_0,\pi_0=u_0\beta+u_1\gamma,\pi_1=2u_1\beta+2u_0v_1\gamma.\]
This implies
\[\Delta\beta=2u_0v_1\pi_0-u_1\pi_1,\Delta\gamma=-2u_1\pi_0+u_0\pi_1\quad (\Delta=2u_0^2v_1-2u_1^2)\]
and therefore
\[\Delta\alpha_0=-2u_1\pi_0+u_0\pi_1, u_0\Delta\beta_0=\Delta\alpha_1-2u_0v_1\pi_0+u_1\pi_1.\]
So we have the solution
\begin{equation}\label{e3.8}
\begin{array}{l}
z^0=\alpha_0(Z)=\dfrac{1}{\Delta}(-2u_1p+u_0Dp),\\\\
z_0=\beta_0(Z)=\dfrac{1}{u_0}Dz^0-\dfrac{2v_1}{\Delta}p+\dfrac{u_1}{u_0\Delta}Dp\end{array}\end{equation}
of equation (\ref{e3.7}) with arbitrary function $p.$ As yet we have dealt with variations only.
\subsection{The realm of true symmetries}\label{ssec3.7}
Let us consider variations $Z$ such that moreover
\begin{equation}\label{e3.9} \mathcal L_Z\pi_0=\lambda\pi_0 \end{equation}
for an~appropriate factor $\lambda\in\mathcal F.$ In more detail, we have the requirement
\[Z\rfloor\text{d}\pi_0+\text{d}p=\lambda\pi_0\quad (p=\pi_0(Z))\]
where
\[\text{d}\pi_0=\text{d}x\wedge\pi_1+\alpha_0\wedge\beta+u_0(-\alpha_0\wedge\beta_0)+\alpha_1\wedge\gamma=\text{d}x\wedge\pi_1+2u_0\beta_0\wedge\gamma,\]
\[\text{d}p=Dp\,\text{d}x+p_{u_0}\alpha_0+p_{u_1}\alpha_1+\sum p_{v_r}\beta_r.\]
We insert $\alpha_0=\gamma, \alpha_1=\beta+u_0\beta_0$ in order to use the advantageous intermediate basis (\ref{e3.6}). Then (\ref{e3.9}) is expressed by the identity
\[(2u_1z+p_{u_1})\beta+(2u_0v_1z+p_{u_0})\gamma+(-2u_0c+u_0p_{u_1}+p_{u_0})\beta_0+\]
\[+p_{v_1}\beta_1+p_{v_2}\beta_2+\cdots=\lambda(u_0\beta+u_1\gamma)\]
where $c=\gamma(Z)=\alpha_0(Z)=z^0.$ The conditions
\begin{equation}\label{e3.10} 2u_1z+p_{u_1}=\lambda u_0,\ 2u_0v_1z+p_{u_0}=\lambda u_1 \end{equation}
determine $z$ and the useless factor $\lambda$ in terms of function $p.$ Function $p$ is subjected to the remaining conditions
\[-2u_0c+u_0p_{u_1}+p_{v_0}=0, p_{v_1}=p_{v_2}=\cdots =0.\]
It follows that $p=p(x,u_0,u_1,v_0)$ and inserting (\ref{e3.8}) for $c,$ only one differential equation
\begin{equation}\label{e3.11} u_0^2(p_x+u_1p_{u_0})+2u_1^2(p_{v_0}+u_1p_{u_1})=2u_0u_1p \end{equation}
for this function appears. Some particular solutions can be explicitly found.
\begin{remark}\label{rem4}
In fact we have obtained all the infinitesimal symmetries $Z$ and the reasons are as follows. In one direction, one can observe that only the forms $\omega=\lambda\pi_0$ $(\lambda\neq 0)$ have the property that the family $\omega,\mathcal L_D\omega,\mathcal L^2_D\omega,\ldots\,$ generates the diffiety~$\Omega.$ Since every infinitesimal symmetry $Z$ preserves this property, it does satisfy~(\ref{e3.9}). In the opposite direction, (\ref{e3.9}) means that the vector field $Z$ preserves the Pfaffian equation $\pi_0=0$ and therefore preserves the \emph{adjoint space} to this equation. This is a~finite--dimensional space and therefore $Z$ generates a~Lie group. (We recall that the adjoint space consists of the most economical family of functions such that the equation $\pi_0=0$ can be expressed in terms of these functions. In our case, three functions
\[\frac{u_0}{u_1},\ u_0v_0-u_1\ln{u_0},\ 2x-u_0u_1\]
are enough. It follows that vector fields $Z$ generate the Lie contact group preserving the equation $\pi_0=0$ in three--dimensional underlying space.)
\end{remark}
\subsection{On the evolutionary operators}\label{ssec3.8}
Assuming $z=0$ in equations (\ref{e3.10}), then the resulting conditions $p_{u_1}=\lambda u_0,p_{u_0}=\lambda u_1$ imply $p=p(x,u_0u_1,v_0)$ and equation (\ref{e3.11}) admits only trivial solution $p=0$ of this kind. It follows that the evolutional operators generating a~group do not exist.
\section{Several independent variables}\label{sec4}
For the partial differential equations, the simplified requirements on the variations can be found as well, however, they remain complicated so that no general existence or non--existence results are actually available. The finite algorithm can be found only for the higher--order symmetries of (roughly saying) $m$ equations with $m+1$ unknown functions at most. The differential equation
\[\frac{\partial u}{\partial x_n}=F(x_1,\ldots,x_n,u,v,\frac{\partial u}{\partial x_1},\ldots,\frac{\partial u}{\partial x_{n-1}},\frac{\partial v}{\partial x_1},\ldots,\frac{\partial v}{\partial x_n})\]
where $u=u(x_1,\ldots,x_n), v=v(x_1,\ldots,x_n)$ may serve for an~example. We however suppose $n=2$ for technical reasons.

Let us introduce space $\mathbf M$ with coordinates
\[x,y,u_r,v_{rs}\quad (r,s=0,1,\ldots\,),\]
diffiety $\Omega$ with the basis
\[\alpha_r=\text{d}u_r-u_{r+1}\text{d}x-D^r_yF\text{d}y, \beta_{rs}=\text{d}v_{rs}-v_{r+1,s}\text{d}x-v_{r,s+1}\text{d}y\quad (r,s=0,1,\ldots\,)\]
where $F=F(x,y,u_0,v_{00},u_1,v_{10},v_{01})$ and the total derivatives
\[\begin{array}{l}
D_x=\frac{\partial}{\partial x}+\sum u_{r+1}\frac{\partial}{\partial u_r}+\sum v_{r+1,s}\frac{\partial}{\partial v_{rs}},\\\\
D_y=\frac{\partial}{\partial y}+\sum D^r_xF\frac{\partial}{\partial u_r}+\sum v_{r,s+1}\frac{\partial}{\partial v_{rs}}.
\end{array}\]
Identities $\mathcal L_{D_x}\alpha_r=\alpha_{r+1},$
\begin{equation}\label{e4.1}
\mathcal L_{D_y}\alpha_0=F_{u_0}\alpha_0+F_{v_{00}}\beta_{00}+F_{u_1}\alpha_1+F_{v_{10}}\beta_{10}+F_{v_{01}}\beta_{01}
\end{equation}
and
\[\mathcal L_{D_y}\alpha_r=\mathcal L_{D_y}\mathcal L^r_{D_x}\alpha_0=\mathcal L^r_{D_x}\mathcal L_{D_y}\alpha_0,
\ \mathcal L_{D_x}\beta_{rs}=\beta_{r+1,s},\ \mathcal L_{D_y}\beta_{rs}=\beta_{r,s+1}\]
are obvious.

In order to simplify the notation, we abbreviate
\[u=u_0, v=v_{00}, v_x=v_{10}, v_y=v_{01}, \alpha=\alpha_0,\beta=\beta_{00},\beta_x=\beta_{10},\beta_y=\beta_{01}\]
from now on.
\subsection{Towards the variations}\label{ssec4.1}
We suppose
\[Z=z^1\frac{\partial}{\partial x}+z^2\frac{\partial}{\partial y}+\sum z_r\frac{\partial}{\partial\alpha_r}+\sum z_{rs}\frac{\partial}{\partial\beta_{rs}}\quad (z_r=\alpha_r(Z), z_{rs}=\beta_{rs}(Z))\]
in the direct approach. Identity (\ref{e4.1}) immediately gives the requirement
\[D_yz_0=F_uz_0+F_vz_{00}+F_{u_1}D_xz_0+F_{v_x}D_xz_{00}+F_{v_y}D_yz_{00}\]
for the initial terms $z_0=\alpha(Z)$ and $z_{00}=\beta(Z)$ of the recurrences
\[z_{r+1}=D_xz_r,\ z_{r+1,s}=D_xz_{rs},\ z_{r,s+1}=D_yz_{rs}.\]
Passing to the better approach, identity (\ref{e4.1}) is simplified as
\begin{equation}\label{e4.2}
\mathcal L_{D_y}\gamma=F_u\gamma+A\beta+B\beta_x+F_{u_1}\mathcal L_{D_x}\gamma\quad(\gamma=\alpha-F_{v_y}\beta)
\end{equation}
where
\[A=F_v+F_uF_{v_y}-D_yF_{v_y}+F_{u_1}D_xF_{v_y},\ B=F_{v_x}+F_{u_1}F_{v_y}.\]
The forms $\alpha_r$ can be replaced with $\gamma_r=\mathcal L^r_{D_x}\gamma$ $(r=0,1,\ldots\,)$ in the original basis of $\Omega.$ So we obtain
\begin{equation}\label{e4.3}
Z=z^1\frac{\partial}{\partial x}+z^2\frac{\partial}{\partial y}+\sum c_r\frac{\partial}{\partial\gamma_r}+\sum z_{rs}\frac{\partial}{\partial\beta_{rs}}\quad (c_r=\gamma_r(Z)) \end{equation}
together with the recurrence $c_{r+1}=D_xc_r$ $(r=0,1,\ldots\,)$ and the requirement
\begin{equation}\label{e4.4} D_yc_0=F_uc_0+Az_{00}+BD_xz_{00}+F_{v_x}D_xc_0 \end{equation}
for the initial terms $c_0$ and $z_{00}.$ Alas, the existence of a~solution is still ambigous. Explicit solution can be found if $A\neq 0$ but $B=0$ identically.
\subsection{Infinitesimal symmetries}\label{ssec4.2}
Variations $Z$ such that moreover
\begin{equation}\label{e4.5}
\mathcal L_Z\beta=\lambda^1\beta+\lambda^2\gamma,\ \mathcal L_Z\gamma=\mu^1\beta+\mu^2\gamma
\end{equation}
are just the infinitesimal symmetries. In more detail, let us abbreviate $b=z_{00}=\beta(Z), c=c_0=\gamma(Z).$ Then
\[\mathcal L_Z\beta=Z\rfloor\text{d}\beta+\text{d}b,\ \mathcal L_Z\gamma=Z\rfloor\text{d}\gamma+\text{d}c\]
should be inserted into (\ref{e4.5}) with
\[\begin{array}{ll}
\text{d}\beta &=\text{d}x\wedge\beta_x+\text{d}y\wedge\beta_y,\\
\text{d}\gamma &=\text{d}\alpha-F_{v_y}\text{d}\beta-\text{d}F_{v_y}\wedge\beta\\
&=\text{d}x\wedge\gamma_1+\text{d}y\wedge\mathcal L_{D_y}\gamma-(F_{v_yu}\alpha+F_{v_yu_1}\alpha_1+F_{v_yv_x}\beta_x+F_{v_yv_y}\beta_y)\wedge\beta
\end{array}\]
and
\[\text{d}b\cong b_{u_1}\alpha_1+b_{v_x}\beta_x+b_{v_y}\beta_y+(\cdot),\ 
\text{d}c\cong c_{u_1}\alpha_1+c_{v_x}\beta_x+c_{v_y}\beta_y+(\cdot) \]
(mod $\beta,\gamma)$ where $(\cdot)$ denotes the higher--order summands. The final result follows by a~mere routine. By looking at the isolated terms $(\cdot),$ one can infer that $b,c$ depend only on the lower--order variables $x,y,u,v,u_1,v_x,v_y.$ So the requirement (\ref{e4.4}) is reduced to a~finite dimension. Moreover
\[z^1+b_{v_x}=z^2+b_{v_y}=b_{u_1}=0\]
easily follows from the first equation (\ref{e4.5}) and the second equation implies
\begin{equation}\label{e4.6}\begin{array}{c}
bF_{v_yv_y}+c_{v_y}=0,\\
z^1+z^2F_{u_1}+bF_{v_yu_1}+c_{u_1}=0,\\
-z^1F_{v_y}+z^2F_{u_x}+bF_{v_yv_x}+c_{v_x}=0.
\end{array}\end{equation}
It follows that the symmetries are subjected to very strong additional requirements, however, they are of the classical finite--dimensional nature. Assuming $z^1=z^2=0,$ then $Z=\frak Z$ becomes the evolutional operator and the system (\ref{e4.6}) is compatible: the Frobenius theorem comfortably applies.
\begin{remark}\label{rem5}
Variations $Z$ preserving the Pfaffian system $\beta=\gamma=0$ preserve the adjoint variables and therefore generate a~group. The converse assertion will be mentioned in broader context later on.
\end{remark}
\section{Trivial differential equations}\label{sec5} We recall the space $\mathbf M=\mathbf M(m,n)$ with jet coordinates (\ref{e1.4a}), the diffiety $\Omega=\Omega(m,n)$ of the forms (\ref{e1.4b}) and the total derivatives $D_i$ in (\ref{e1.3}). They correspond to the trivial (empty) system (\ref{e1.5a}). The variations
\begin{equation}\label{e5.1} Z=\sum z_iD_i+\sum z^j_I\frac{\partial}{\partial w^j_I}\qquad (z_i=Zx_i,\,z^j_I=\omega^j_I(Z)) \end{equation}
are given by the formulae
\begin{equation}\label{e5.2} z^j_I=\omega^j_I(Z)=D_I\omega^j(Z)=D_Iz^j\quad (I=i_1\cdots i_r,\, D_I=D_{i_1}\cdots D_{i_r}) \end{equation}
already stated in (\ref{e1.5}).

We recall the triviality
\begin{equation}\label{e5.3}
\text{d}f=\sum\frac{\partial f}{\partial x_i}\text{d}x_i+\sum\frac{\partial f}{\partial w^j_I}\text{d}w^j_I=\sum D_if\text{d}x_i+\sum\frac{\partial f}{\partial\omega^j_I}\omega^j_I\quad (f\in\mathcal F)
\end{equation}
which implies the equalities $\partial f/\partial w^j_I=\partial f/\partial\omega^j_I.$ The inclusion $\mathcal L_Z\Omega\subset\Omega$ is clearly equivalent to $\mathcal L_Z\mathcal H\subset\mathcal H$ whence the rules
\begin{equation}\label{e5.4}\begin{array}{c}
\mathcal L_ZD_i=[Z,D_i]=-D_iz_i\cdot D_i,\ \mathcal L_Z\mathcal L_{D_i}-\mathcal L_{D_i}\mathcal L_Z=-D_iz_i\cdot\mathcal L_{D_i},\\
\mathcal L_Z\omega^j_{Ii}=\mathcal L_Z\mathcal L_{D_i}\omega^j_I=\mathcal L_{D_i}\mathcal L_Z\omega^j_I-D_iz_i\omega^j_{Ii}
\end{array}\end{equation}
easily follow. If $\Omega(m,n)_l\subset\Omega(m,n)$ is the submodule generated by all forms $\omega^j_I$ $(j=1,\ldots,m;\,I=i_1\cdots i_r;\,|I|=r\leq l)$ then $\Omega_*:\Omega(m,n)_0\subset\Omega(m,n)_1\subset\cdots\subset\Omega(m,n)$ is a~good "order--preserving" filtration.
\begin{theorem}[Lie--B\"acklund]\label{t5.1}
Let $m>1$ and $Z$ be a~variation such that $\mathcal L_Z\Omega(m,n)_l\subset\Omega(m,n)_l$ for appropriate $l.$ Then the inclusion is satisfied for all $l=0,1,\ldots\,$ and $Z$ is the infinitesimal point symmetry in the sense that all functions $Zx_i, Zw^j$ $(i=1,\ldots,n;\,j=1,\ldots,m\,)$ depend only on coordinates $x_1,\ldots,x_n,w^1,\ldots,w^m.$
\end{theorem}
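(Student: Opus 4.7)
The plan is to convert the filtration hypothesis into pointwise conditions on the characteristic data $z_i=Zx_i$ and $z^j=\omega^j(Z)$. Using the decomposition $\text{d}f=\sum D_if\,\text{d}x_i+\sum(\partial f/\partial w^{j'}_{I'})\,\omega^{j'}_{I'}$ from \eqref{e5.3}, together with $z^j_I=\omega^j_I(Z)=D_Iz^j$ from \eqref{e5.2}, a direct computation of $\mathcal{L}_Z\omega^j_I=Z\rfloor\text{d}\omega^j_I+\text{d}z^j_I$ shows that the $\text{d}x_i$-parts cancel precisely because $Z$ is a variation, leaving
\[
\mathcal{L}_Z\omega^j_I=\sum_{j',I'}\frac{\partial z^j_I}{\partial w^{j'}_{I'}}\,\omega^{j'}_{I'}+\sum_i z_i\,\omega^j_{Ii}.
\]
The hypothesis becomes the statement that, for every $|I|\le l$, all contact forms $\omega^{j'}_{I'}$ of order $|I'|>l$ appear on the right with zero coefficient.

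Let $\sigma'$ be the largest order $|K_0|$ such that some $z^j$ depends on $w^{j_0}_{K_0}$. Iterating $D_i=\partial/\partial x_i+\sum w^{j'}_{I'i}\,\partial/\partial w^{j'}_{I'}$ shows that the leading part of $z^j_I=D_Iz^j$ equals $\sum_{j_0,\,|K_0|=\sigma'}w^{j_0}_{K_0 I}\,\partial z^j/\partial w^{j_0}_{K_0}$ modulo terms whose $w$-order is strictly smaller, because any step in which $D_i$ fails to promote the top $w$-order strictly lowers it. Consequently $\partial z^j_I/\partial w^{j_0}_{K}=\partial z^j/\partial w^{j_0}_{K\setminus I}$ whenever $|K|=\sigma'+|I|$ and $I\subset K$ as multi-indices. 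Applied to $|I|=l$ with $\sigma'\ge 2$: the order $\sigma'+l$ strictly exceeds $l+1$, so these top terms receive no contribution from the second sum $\sum z_i\omega^j_{Ii}$ and are forced to vanish, yielding $\partial z^j/\partial w^{j_0}_{K_0}=0$ for all $|K_0|=\sigma'$. Iterating shrinks $\sigma'$ until $\sigma'\le 1$.

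At $\sigma'=1$ the order-$(l+1)$ balance in $\mathcal{L}_Z\omega^j_I$ reads $\partial z^j/\partial w^{j'}_{i}+z_i\delta^{j'}_{j}=0$ for every $j',i$; the branch $j'\ne j$, which exists only because $m>1$, suppresses all cross-dependence, and integration gives $z^j=\phi^j(x,w)-\sum_i z_iw^j_i$ for some $\phi^j$ depending only on $x_1,\dots,x_n,w^1,\dots,w^m$. Since $z^j=Zw^j-\sum_i z_iw^j_i$, this already identifies $Zw^j=\phi^j(x,w)$. To handle $z_i=Zx_i$ I would exploit $m>1$ once more: for any two dependent indices $j_1\ne j_2$ the identity $z_i=-\partial z^{j_1}/\partial w^{j_1}_i=-\partial z^{j_2}/\partial w^{j_2}_i$ exhibits $z_i$ simultaneously as a function of $(x,w,w^{j_1}_*)$ and of $(x,w,w^{j_2}_*)$, forcing dependence only on $(x,w)$.

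Once the point structure is recovered, $\mathcal{L}_Z\Omega(m,n)_l\subset\Omega(m,n)_l$ for every $l$ is automatic, since the two sums in the displayed formula cancel at order $l+1$ uniformly. The principal technical obstacle I foresee is the rigorous justification of the ``leading coefficient'' identity $\partial z^j_I/\partial w^{j_0}_{K}=\partial z^j/\partial w^{j_0}_{K\setminus I}$: one must control the chain-rule terms in $D_Iz^j$ in which some $D_i$ differentiates a coefficient rather than promoting the highest $w$-order, and this requires a careful induction on $|I|$ that relies on the commutativity of the $D_i$.
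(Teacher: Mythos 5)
Your proposal is correct and arrives at exactly the same first--order system $\partial z^j/\partial w^{j'}_i+z_i\delta^{j'}_j=0$, $\partial z^j/\partial w^{j'}_{I'}=0$ $(|I'|>1)$ that the paper derives, with the same use of $m>1$ to exhibit $z_i$ as $-\partial z^{j_1}/\partial w^{j_1}_i=-\partial z^{j_2}/\partial w^{j_2}_i$ and conclude $z_i=z_i(x,w)$, $Zw^j=F^j(x,w)$. Where you genuinely differ is the reduction step. The paper first propagates the hypothesis down the filtration: by the commutator rule (\ref{e5.4}), $\mathcal L_{D_i}\mathcal L_Z\omega=\mathcal L_Z\mathcal L_{D_i}\omega+D_iz_i\,\mathcal L_{D_i}\omega\in\Omega_l$ for $\omega\in\Omega_{l-1}$, and since $\mathcal L_{D_i}$ raises the top order of a contact form injectively this forces $\mathcal L_Z\omega\in\Omega_{l-1}$; iterating reduces everything to $\mathcal L_Z\Omega_0\subset\Omega_0$, which is then settled by a single expansion of $\mathcal L_Z\omega^j$. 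You instead stay at level $l$ and run a symbol computation on $z^j_I=D_Iz^j$, using the leading--coefficient identity $\partial z^j_I/\partial w^{j_0}_K=\partial z^j/\partial w^{j_0}_{K\setminus I}$ to kill all dependence of $z^j$ on variables of order $\ge 2$. The identity does hold (only the summands $w^{j'}_{I'i}\,\partial/\partial w^{j'}_{I'}$ with $|I'|$ maximal can reach the top order, so the induction on $|I|$ you foresee goes through), and it plays precisely the role that the injectivity of the symbol of $\mathcal L_{D_i}$ plays in the paper's downward step; the two routes carry comparable technical weight, yours being more computational, the paper's more structural. Two small repairs: perform the integration of $\partial z^j/\partial w^j_i=-z_i$ only after $z_i$ has been pinned down to a function of $(x,w)$ (or argue via the vanishing Hessian $\partial^2z^j/\partial w^j_i\partial w^j_{i'}=0$), and note that the degenerate case $\sigma'=0$ simply yields $z_i=0$ and is absorbed by the same formulas.
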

\begin{proof}
Abbreviating $\Omega_l=\Omega(m,n)_l$ for now, we assume $\mathcal L_Z\Omega_l\subset\Omega_l$ with a~certain $l.$ Then $\mathcal L_Z\Omega_{l+1}\subset\Omega_{l+1}$ follows by applying (\ref{e5.4}) to the equality $\Omega_{l+1}=\Omega_l+\mathcal L_\mathcal H\Omega_l.$ Let $l>0$ and $\omega\in\Omega_{l-1}.$ Then $\mathcal L_{D_i}\omega\in\Omega_l$ hence $\mathcal L_Z\mathcal L_{D_i}\omega\in\Omega_l$ and (\ref{e5.4}) implies  $\mathcal L_{D_i}\mathcal L_Z\omega\in\Omega_l$ whence $\mathcal L_Z\omega\in\Omega_{l-1}$ and $\mathcal L_Z\Omega_{l-1}\subset\Omega_{l-1}.$ So we may assume $\mathcal L_Z\Omega_0\subset\Omega_0.$ In more detail
\[\mathcal L_Z\omega^j=Z\rfloor\text{d}\omega^j+\text{d}\,\omega^j(Z)=\sum\lambda^j_k\omega^k\qquad (j=1,\ldots,m)\]
where $\text{d}\omega^j=\sum\text{d}x_i\wedge\omega^j_i.$ Applying moreover (\ref{e5.3}) to the functions $f=\omega^j(Z)=z^j,$ one can directly obtain the identities
\[z_i+\sum\frac{\partial z^j}{\partial w^j_i}=0, \frac{\partial z^j}{\partial w^k_i}=0\ (j\neq k),\] \[\frac{\partial z^j}{\partial w^k_I}=0 \ (i=1,\ldots,n;\,j,k=1,\ldots,m;\,|I|>1).\]
Since $m>1,$ we infer that
\[z_i=z_i(\cdot\cdot,x_{i'},w^{j'},\cdot\cdot),\ z^j=-\sum z_iw^j_i+F^j(\cdot\cdot,x_{i'},w^{j'},\cdot\cdot)\]
for appropriate functions $F^j$ and therefore
\[Zw^j=\text{d}w^j(Z)=\omega^j(Z)+\sum w^j_i\text{d}x_i(Z)=z^j+\sum w^j_iz_i=F^j.\]
This concludes the proof. \end{proof}
\begin{remark}\label{rem6} The original Lie--B\"acklund theorem concerns the \emph{symmetries of the finite--order jet spaces}. It was rigorously proved much later and we refer to the extensive discussion \cite[pp. 66--80]{T8}. Alas, this rather tedious method fails for the \emph{diffieties} where we refer to short tricky proof \cite{T3}. \end{remark}

The above result for the \emph{infinitesimal symmetries} can be completed as follows.
\begin{theorem}\label{th5.2} All infinitesimal symmetries of diffieties $\Omega(1,n)$ are only the classical Lie contact transformations.\end{theorem}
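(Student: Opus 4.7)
Plan. Theorem~\ref{t5.1} already covers the case where $\mathcal{L}_Z$ preserves some filtration piece $\Omega(1,n)_l$; my strategy is to extract this filtration-preservation from the Lie-group hypothesis using Lemma~\ref{l2.2}, and then apply the descent argument inside the proof of Theorem~\ref{t5.1} verbatim (the descent is valid for any $m$). Because $m=1$, the bottom module $\Gamma = \Omega(1,n)_0 = \mathcal{F}\omega^1$ is \emph{one-dimensional}, and the recurrence $\mathcal{L}_{D_{i_1}}\cdots\mathcal{L}_{D_{i_r}}\omega^1 = \omega^1_{i_1\cdots i_r}$ shows that $\Omega(1,n) = \Gamma + \mathcal{L}_\mathcal{H}\Gamma + \mathcal{L}_\mathcal{H}^2\Gamma + \cdots$. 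Thus $\Gamma$ satisfies the hypothesis of Lemma~\ref{l2.2}, which produces $k$ such that
\[V = \mathcal{F}\omega^1 + \mathcal{F}\mathcal{L}_Z\omega^1 + \cdots + \mathcal{F}\mathcal{L}_Z^k\omega^1\]
is a finitely generated, $\mathcal{L}_Z$-invariant submodule containing $\omega^1$ and sitting inside some $\Omega(1,n)_l$.

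I would then bootstrap $\mathcal{L}_Z$-invariance from $V$ to a filtration piece. Since $[Z,D]\in\mathcal{H}$ for every $D\in\mathcal{H}$ (as $Z$ is a variation), the identity $\mathcal{L}_Z\mathcal{L}_D = \mathcal{L}_D\mathcal{L}_Z + \mathcal{L}_{[Z,D]}$ inductively makes each $V_r = V + \mathcal{L}_\mathcal{H}V + \cdots + \mathcal{L}_\mathcal{H}^rV$ still $\mathcal{L}_Z$-invariant, while the inclusion $\mathcal{F}\omega^1\subset V$ gives the sandwich $\Omega(1,n)_r \subset V_r \subset \Omega(1,n)_{l+r}$. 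Squeezed between two filtration pieces and using the good-filtration identity $\Omega_l + \mathcal{L}_\mathcal{H}\Omega_l = \Omega_{l+1}$ from (\ref{e2.2}), $V_r$ must eventually coincide with some $\Omega(1,n)_{l'}$, yielding $\mathcal{L}_Z\Omega(1,n)_{l'}\subset\Omega(1,n)_{l'}$. The descent of the proof of Theorem~\ref{t5.1} then reduces this step by step to $\mathcal{L}_Z\Omega(1,n)_0\subset\Omega(1,n)_0$, i.e.\ $\mathcal{L}_Z\omega^1 = \lambda\omega^1$ for some $\lambda\in\mathcal{F}$. Expanding with $\mbox{d}\omega^1 = \sum\mbox{d}x_i\wedge\omega^1_i$ and the decomposition (\ref{e5.3}) applied to $f = \omega^1(Z) = z^1$, matching coefficients of the independent $\omega^1_I$ forces $\partial z^1/\partial w^1_I = 0$ for $|I|\geq 2$ and analogous relations for $z_i = Zx_i$, so $Zx_i,\ Zw^1,\ Zw^1_i$ all depend only on the $1$-jet variables $x_{i'},w^1,w^1_{i'}$. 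This is the infinitesimal Lie contact condition, and the prolongation (\ref{e5.2}) identifies $Z$ with the classical prolonged Lie contact vector field.

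The main obstacle is the middle step: promoting $\mathcal{L}_Z$-invariance of the finitely generated but \emph{a priori} non-canonical $V_r$ to that of an actual filtration slot $\Omega(1,n)_{l'}$. The sandwich $\Omega_r \subset V_r \subset \Omega_{l+r}$ is only a pair of inclusions, and closing it to an equality requires showing that any $\mathcal{L}_Z$-invariant submodule of $\Omega(1,n)$ containing $\omega^1$ and enlarged by sufficiently many $\mathcal{L}_\mathcal{H}$-iterates must coincide with the canonical filtration --- a rigidity statement that reflects the fact that for $m=1$ the single form $\omega^1$ generates the whole diffiety. Controlling the quotient $V_r/\Omega(1,n)_r$ via the good-filtration growth in (\ref{e2.2}) is where the technical weight of the argument concentrates, and is also the point that makes the $m=1$ case genuinely sharper (contact rather than point) than the $m>1$ case of Theorem~\ref{t5.1}.
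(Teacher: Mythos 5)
Your opening move is exactly the paper's: apply Lemma~\ref{l2.2} with $\Gamma=\Omega(1,n)_0=\mathcal F\omega^1$, so that $V=\mathcal F\omega^1+\mathcal F\mathcal L_Z\omega^1+\cdots+\mathcal F\mathcal L_Z^k\omega^1$ is $\mathcal L_Z$--invariant and finitely generated, whence every iterate $\mathcal L_Z^r\omega^1$ lies in a fixed $\Omega(1,n)_l$. Up to that point you coincide with the intended argument. The trouble is what follows. The step you yourself flag as ``the main obstacle'' --- promoting the sandwich $\Omega(1,n)_r\subset V_r\subset\Omega(1,n)_{l+r}$ to an equality $V_r=\Omega(1,n)_{l'}$ --- is a genuine gap, not a technicality: a pair of inclusions between filtration slots does not force an $\mathcal L_Z$--invariant intermediate module to be a slot itself, and the only evident way to prove that rigidity is to already know $\mathcal L_Z\omega^1=\lambda\omega^1$ (after which $V=\Omega(1,n)_0$ and $V_r=\Omega(1,n)_r$ trivially). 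So the detour through the descent of Theorem~\ref{t5.1} is circular at its crucial joint, and the proposal as written does not contain a proof.

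The ingredient you are missing is a direct order--growth estimate; this is what the paper's one--line claim ``this is possible if and only if $\mathcal L_Z\omega^1=\lambda\omega^1$'' rests on, and it makes the whole $V_r$ construction unnecessary. Write $\mathcal L_Z\omega^1=\sum a_I\omega^1_I$ and suppose its top order is $p\geq 1$. The commutation rule (\ref{e5.4}), $\mathcal L_Z\omega^1_{Ii}=\mathcal L_{D_i}\mathcal L_Z\omega^1_I-D_iz_i\,\omega^1_{Ii}$, shows that the order--$(|I|+p)$ part of $\mathcal L_Z\omega^1_I$ is $\sum_{|J|=p}a_J\omega^1_{IJ}$; hence the top--order part of $\mathcal L_Z^2\omega^1$ is $\sum_{|I|=|J|=p}a_Ia_J\omega^1_{IJ}$, i.e.\ the square of the nonzero symbol $\sum_{|I|=p}a_I\xi^I$, which cannot vanish. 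Inductively $\mathcal L_Z^r\omega^1$ has order exactly $rp$, contradicting $\mathcal L_Z^r\omega^1\in V\subset\Omega(1,n)_l$. Therefore $p=0$ and $\mathcal L_Z\omega^1=\lambda\omega^1$ at once, with no filtration preservation or descent needed; your final paragraph (reading off from (\ref{e5.3}) that $z^1$ and $z_i=-\partial z^1/\partial w^1_i$ depend only on first--order variables and prolonging by (\ref{e5.2})) then correctly identifies $Z$ as a classical Lie contact field. I would also note that because $\Gamma$ is generated by a single form, the conclusion of Lemma~\ref{l2.2} reduces to boundedness of the orbit $\omega^1,\mathcal L_Z\omega^1,\ldots$ with no further structure, which is why the short route works here and why the $m>1$ case needs the separate argument of Theorem~\ref{t5.1}.
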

\begin{proof}
Lemma~\ref{l2.2} can be applied with module $\Gamma=\Omega(1,n)$ which is generated by single form $\omega^1.$ It follows that all forms \[\omega^1,\mathcal L_Z\omega^1,\mathcal L_Z^2\omega^1,\ldots\] must be of a~limited order and this is possible if and only if $\mathcal L_Z\omega^1=\lambda\omega^1$. \end{proof}
\begin{remark}\label{rem6a} We note on this occasion that the classical "wave mechanism" of Lie's contact transformations of diffiety $\Omega(1,n)$ can be  generalized to obtain many \emph{higher--order symmetries of diffieties $\Omega(m,n)$ where $m>1$} \cite{T9,T10}. They destroy the order--preserving filtration $\Omega(m,n)_*.$ \end{remark}
Let us conclude with the \emph{infinitesimal symmetries} $Z$ such that
\begin{equation}\label{e5.5}\mathcal L_Z^2\Omega(m,n)_0\subset\Omega(m,n)_0+\mathcal L_Z\Omega(m,n)_0\subset\Omega(m,n)_1. \end{equation}
Alas, even this "simple" symmetry problem cannot be resolved in full generality here. So we suppose $m=2$ and the more explicit condition
\begin{equation}\label{e5.6} \mathcal L_Z\pi=\lambda\pi,\ \mathcal L_Z\omega^2=\sum\mu^j\omega^j+\sum\lambda_i\mathcal L_{D_i}\pi\qquad (\pi=\omega^1+a\omega^2) \end{equation}
where $j=1,2$ and $a$ is a~given function. Requirement (\ref{e5.3}) is satisfied since
\[\mathcal L_Z\mathcal L_{D_i}\pi=\mathcal L_{D_i}\mathcal L_Z\pi-D_iz_i\mathcal L_{D_i}\pi=D_i\lambda\pi+(\lambda-D_iz_i)\mathcal L_{D_i}\pi\in\Omega(2,n)_1\]
by applying (\ref{e5.4}).

Passing to the symmetry problem, the obvious identities
\[\begin{array}{ll}
\mathcal L_Z\pi=\mathcal L_Z\omega^1+a\mathcal L_Z\omega^2+Za\,\omega^2,&\mathcal L_{D_i}\pi=\omega^1_i+a\omega^2_i+D_ia\,\omega^2\\
\mathcal L_Z\omega^j=Z\rfloor\sum\text{d}x_i\wedge\omega^j_i+\text{d}z^j,& \text{d}z^j=\sum D_iz^j\text{d}x_i+\sum\frac{\partial z^j}{\partial\omega^k_I}\omega^k_I
\end{array}\]
directly give the following result. The coefficients $z^j=z^j(\cdot\cdot,x_{i'},w^{j'},w^{j'}_{i'},\cdot\cdot)$ are of the first order at most and satisfy the conditions
\[\mathcal Dz^1+a\mathcal Dz^2=Za,\ \mathcal D_iz^1=a\mathcal D_iz^2,\ z_i=\mathcal D_iz^2\]
where
\[\mathcal D=a\frac{\partial}{\partial w^1}-\frac{\partial}{\partial w^2},\ \mathcal D_i=a\frac{\partial}{\partial w^1_i}-\frac{\partial}{\partial w^2_i},\ Za=\sum z_iD_ia+\sum D_Iz^j\frac{\partial a}{\partial w^j_I}.\]

Explicit solution with $\lambda_i\neq 0$ could be stated in the particular case when $a=a(x_1,\ldots,x_n).$ In any case, we conclude that the higher--order symmetries of trivial equations are highly nontrivial.
\section{Further perspectives}\label{sec6}
We have employed only elementary tools as yet. The general theory however rests on more advanced principles.
\subsection{The Lie--Poisson bracket}\label{ssec6.1}
A~variation $Z$ of diffiety $\Omega$ is determined if the initial terms of certain recurrences, namely some functions $\omega(Z)$ with special $\omega\in\Omega,$ are known. It follows that the familiar identity
\begin{equation}\begin{array}{rl}\label{e6.1}
\omega([X,Y])&=X\omega(Y)-Y\omega(X)-\text{d}\omega\,(X,Y)\\
&=\mathcal L_Y\omega(X)-\mathcal L_X\omega(Y)+\text{d}\omega\,(X,Y) \end{array}\end{equation}
determines the bracket $[X,Y]$ in terms of variations $X,Y.$ If in particular $X=\frak X,$ $Y=\frak Y$ are the evolutional symmetries (hence $\frak Xx_i=\frak Yx_i=0$) and $\omega=\omega_f=\text{d}f-\sum D_if\text{d}x_i$ is a~contact form, we obtain the triviality
\[\begin{array}{rl}
[\frak X,\frak Y]f&=\omega_f([\frak X,\frak Y])=\frak X\omega_f(\frak Y)-\frak Y\omega_f(\frak X)\\
&=\omega_{\frak Yf}(\frak X)-\omega_{\frak Xf}(\frak Y)=\frak X\frak Yf-\frak Y\frak Xf.\end{array}\]
However, nontrivial interpretation is possible \cite{T8}. Denoting
\begin{equation}\label{e6.2} \{F,G\}=\frak XF-\frak YG\qquad (F=\omega_f(\frak X), G=\omega_f(\frak Y)),
\end{equation}
we have the \emph{higher--order Poisson bracket $\{\cdot\}.$} The sense of the construction lies in the schema
\[\frak X\leftrightarrow\omega_f(\frak X),\ \frak Y\leftrightarrow\omega_f(\frak Y),\ [\frak X,\frak Y]\leftrightarrow\{\omega_f(\frak X),\omega_f(\frak Y)\}\]
which means that the Lie bracket of vector fields corresponds to the Poisson bracket of (families of) certain functions. Using (\ref{e6.1}), analogous "bracket" can be introduced for general variations $X,Y$ and forms $\omega\in\Omega$ as well.
\subsection{The role of involutivity \cite{T5,T11}}\label{ssec6.2} Let $\Omega_l\subset\Omega$ be a~term of a~good filtration $\Omega_*$ of a~diffiety~$\Omega$ and $x_1,\ldots,x_n$ be "not too special" independent variables. We introduce the following construction.

1: Let $\omega^1,\ldots,\omega^{\sigma_1}\in\Omega_l$ be a~maximal family such that $$\mathcal L_{D_1}\omega^1,\ldots,\mathcal L_{D_1}\omega^{\sigma_1}$$ are linearly independent forms mod $\Omega_l.$

2: Let $\omega^{\sigma_1+1},\ldots,\omega^{\sigma_1+\sigma_2}\in\Omega_l$ be a~maximal family such that $$\mathcal L_{D_2}\omega^{\sigma_1+1}, \ldots, \mathcal L_{D_2}\omega^{\sigma_1+\sigma_2}$$ are linearly independent forms mod $\Omega_l$ and the previous $\mathcal L_{D_1}\omega^1,\ldots,\mathcal L_{D_1}\omega^{\sigma_1}.$

$\vdots$

$n-1$: Let $\omega^{\sigma_1+\cdots\sigma_{n-2}+1},\ldots,\omega^{\sigma_1+\cdots\sigma_{n-1}}\in\Omega_l$ be a~maximal family such that  $$\mathcal L_{D_{n-1}}\omega^{\sigma_1+\cdots\sigma_{n-2}+1},\ldots,\mathcal L_{D_{n-1}}\omega^{\sigma_1+\cdots\sigma_{n-1}}$$ are linearly independent forms mod $\Omega_l$ and the previous $\mathcal L_{D_1}\omega^1,\ldots,\mathcal L_{D_{n-2}}\omega^{\sigma_1+\cdots+\sigma_{n-2}}.$

$n\hspace{0,6cm}$: Let $\omega^{\sigma_1+\cdots\sigma_{n-1}+1},\ldots,\omega^{\sigma_1+\cdots\sigma_n}\in\Omega_l$ be a~maximal family such that  $$\mathcal L_{D_{n}}\omega^{\sigma_1+\cdots\sigma_{n-1}+1},\ldots,\mathcal L_{D_{n}}\omega^{\sigma_1+\cdots\sigma_{n}}$$ are linearly independent forms mod $\Omega_l$ and the previous $\mathcal L_{D_1}\omega^1,\ldots,\mathcal L_{D_{n-1}}\omega^{\sigma_1+\cdots+\sigma_{n-1}}.$

The \emph{involutivity theorem} reads: \emph{denoting}
\[\omega^j_{r_1\cdots r_n}=\mathcal L_{D_1}^{r_1}\cdots\mathcal L_{D_n}^{r_n}\omega^j,\]
\emph{and assuming $l$ large enough, then the forms}
\begin{equation}\label{e6.3}\begin{array}{l}
\omega^j_{r_1\cdots r_n}\ (j=1,\ldots,\sigma_1),\quad \omega^j_{0r_2\cdots r_n}\ (j=\sigma_1+1,\ldots,\sigma_2),\quad\ldots \\
\omega^j_{0\cdots 0r_{n-1}r_n}\ (j=\sigma_{n-2}+1,\ldots,\sigma_{n-1}),\quad\omega^j_{0\cdots 0r_n}\ (j=\sigma_{n-1}+1,\ldots,\sigma_{n})\end{array}\end{equation}
\emph{where $r_1,\ldots,r_n=1,2,\ldots\,$ are linearly independent.} It follows that together with a~basis of~$\Omega_l$ they provide a~basis of total diffiety $\Omega.$ The result is of a~fundamental importance. Variations $Z$ are determined by certain values $\omega(Z)$ where $\omega\in\Omega.$ If $\omega\in\Omega_l,$ these functions $\omega(Z)$ are subjected to a~certain \emph{finite number} of requirements arising from the differential equations under consideration. Assuming $\omega\notin \Omega_l,$ it is sufficient to consider  only forms (\ref{e6.3}) of the basis and \emph{they are subjected only to the recurrences}
\[\omega^j_{r_1+1,r_2\cdots r_n}(Z)=D_1\omega^j_{r_1,\cdots r_n}(Z),\ldots ,\ \omega^j_{r_1\cdots r_{n-1},r_n+1}(Z)=D_n\omega^j_{r_1,\cdots r_n}(Z).\] It should be however noted that this deep achievement does not much affect the earthly practice of routine calculations.
\subsection{Adjustment of ordinary differential equations}\label{ssec6.3} 
The case of one independent variable and one total derivative $D=D_1$ is simple \cite{T4,T5}. If $\Omega_*$ is a~good filtration, the forms $\omega\in\Omega$ are organized into several ramified strings.\\
\begin{picture}(140,130)
\put(10,105){\line(1,0){10}}\put(20,105){\line(0,-1){80}}\put(10,110){$\Omega_0$}
\put(40,105){\line(1,0){10}}\put(50,105){\line(0,-1){80}}\put(40,110){$\Omega_1$}
\put(70,105){\line(1,0){10}}\put(80,105){\line(0,-1){80}}\put(70,110){$\Omega_2$}
\qbezier(10,95)(20,105)(34,95)\put(35,95){\vector(1,-1){1}}
\qbezier(35,95)(45,105)(59,95)\put(59,95){\vector(1,-1){1}}\put(59,95){\circle{8}}
\qbezier(60,95)(70,105)(84,95)\put(84,95){\vector(1,-1){1}}\put(87,93){$\cdots$}
\qbezier(10,75)(20,85)(34,75)\put(35,75){\vector(1,-1){1}}
\qbezier(35,75)(45,90)(59,94)\put(59,94){\vector(2,1){1}}
\qbezier(10,55)(20,65)(34,55)\put(35,55){\vector(1,-1){1}}
\put(4,48){$\omega$}\put(27,43){$\mathcal L_D\omega$}
\put(54,43){$\mathcal L^2_D\omega$}
\qbezier(35,55)(45,65)(59,55)\put(59,55){\vector(1,-1){1}}\put(35,55){\circle{8}}
\qbezier(60,55)(70,65)(84,55)\put(84,55){\vector(1,-1){1}}\put(87,53){$\cdots$}
\qbezier(10,35)(20,45)(34,54)\put(35,54){\vector(2,1){1}}
\put(20,10){Figure 1a)}
\end{picture}
\begin{picture}(170,130)
\put(10,105){\line(1,0){10}}\multiput(20,0)(0,5){16}{\put(0,25){\line(0,1){2}}}
\put(40,105){\line(1,0){10}}\multiput(50,0)(0,5){16}{\put(0,25){\line(0,1){2}}}
\put(70,105){\line(1,0){10}}\multiput(80,0)(0,5){16}{\put(0,25){\line(0,1){2}}}
\put(95,105){\line(1,0){10}}\put(105,105){\line(0,-1){80}}\put(95,110){$\Omega_1$}
\put(120,105){\line(1,0){10}}\put(130,105){\line(0,-1){80}}\put(120,110){$\Omega_2$}
\put(-3,86){$\pi^1_0$}
\qbezier(10,90)(20,100)(34,90)\put(35,90){\vector(1,-1){1}}
\qbezier(35,90)(45,100)(59,90)\put(59,90){\vector(1,-1){1}}
\qbezier(60,90)(70,100)(84,90)\put(84,90){\vector(1,-1){1}}
\qbezier(85,90)(95,100)(109,90)\put(109,90){\vector(1,-1){1}}
\qbezier(110,90)(120,100)(134,90)\put(134,90){\vector(1,-1){1}}\put(140,88){$\cdots$}
\put(23,53){$\pi^2_0$}
\qbezier(35,55)(45,65)(59,55)\put(59,55){\vector(1,-1){1}}
\qbezier(60,55)(70,65)(84,55)\put(84,55){\vector(1,-1){1}}
\qbezier(85,55)(95,65)(109,55)\put(109,55){\vector(1,-1){1}}
\qbezier(110,55)(120,65)(134,55)\put(134,55){\vector(1,-1){1}}\put(140,53){$\cdots$}
\put(5,25){$\mathcal R$}
\qbezier(5,60)(-10,35)(5,35)\qbezier(5,35)(15,35)(10,60)\put(10,61){\vector(-1,3){1}}
\put(50,10){Figure 1b)}
\end{picture}\\
After an~appropriate change of lower--order terms, the strings become rectified and this provides the standard basis. If there is only one string, it does not change after any symmetry \cite[Theorem 26]{T4}. We have omitted some "noncontrollability" submodules $\mathcal R\subset\Omega$ of a~finite dimension here.
\subsection{Adjustment of partial differential equations}\label{ssec6.4} The arrangements are not so easy (see \cite{T5}) and we can mention only the case of two independent variables. If $\Omega_*$ is a~good filtration, the forms $\omega\in\Omega$ are organized into two--dimensional sheets which may be ramified. The sheets can be made flat and each of them then consists of a~finite family of overlapping infinite "triangles". We present the case of only one ramified sheet and the corresponding adjustment with two overlapping triangles. In this particular case, the symmetries $Z$ preserve the vertices $\beta, \gamma.$\\
\begin{picture}(140,130)(0,-10)
\qbezier(10,85)(25,95)(40,90)\put(40,90){\vector(2,-1){1}}
\qbezier(40,90)(55,100)(75,90)\put(75,90){\vector(2,-1){1}}
\qbezier(75,90)(90,100)(110,90)\put(110,90){\vector(2,-1){1}}\put(115,87){$\cdots$}
\qbezier(10,85)(25,65)(40,60)\put(40,60){\vector(2,-1){1}}
\qbezier(40,90)(55,70)(75,65)\put(75,65){\vector(1,0){1}}
\qbezier(75,90)(85,70)(107,70)\put(107,70){\vector(1,0){1}}\put(115,67){$\cdots$}
\qbezier(40,60)(55,70)(75,65)\put(75,65){\vector(1,0){1}}
\qbezier(75,65)(90,72)(107,70)
\qbezier(40,60)(55,45)(70,45)\put(74,45){\circle{8}}\put(104,50){\circle{8}}
\qbezier(78,45)(90,52)(100,50)
\qbezier(75,65)(90,52)(100,51)
\qbezier(75,41)(85,29)(100,27)\put(100,26){\vector(2,-1){1}}
\qbezier(106,47)(120,34)(130,32)\put(130,31){\vector(2,-1){1}}
\qbezier(106,53)(120,57)(130,55)\put(130,55){\vector(1,0){1}}
\put(15,95){$\mathcal L_{D_1}$}
\put(10,55){$\mathcal L_{D_2}$}
\qbezier[40](10,75)(25,85)(40,80)\put(40,80){\vector(2,-1){1}}
\qbezier[40](40,80)(55,90)(75,80)\put(75,80){\vector(2,-1){1}}
\qbezier[40](75,80)(90,90)(110,80)\put(110,80){\vector(2,-1){1}}\put(115,77){$\cdots$}
\qbezier[40](10,75)(25,55)(40,50)\put(40,50){\vector(2,-1){1}}
\qbezier[40](40,80)(55,60)(75,55)\put(75,55){\vector(1,0){1}}
\qbezier[40](75,80)(85,60)(107,60)\put(107,60){\vector(1,0){1}}\put(115,57){$\cdots$}
\qbezier[30](40,50)(55,60)(75,55)\put(75,55){\vector(1,0){1}}
\qbezier[30](75,55)(90,62)(107,60)
\qbezier[40](40,50)(55,35)(70,43)
\qbezier[30](75,55)(90,43)(101,47)
\put(50,0){Figure 2a)}
\end{picture}
\begin{picture}(150,130)(0,-10)
\put(52,67){$\beta$}\put(56,70){\circle{13}}
\put(40,47){$\gamma$}\put(43,49){\circle{13}}
\qbezier(25,31)(75,15)(75,93)\qbezier(25,30)(25,105)(75,93)
\qbezier(45,55)(40,65)(32,74)\put(33,73){\vector(-1,1){1}}
\qbezier(56,77)(56,87)(42,88)\put(43,88){\vector(-2,0){1}}
\put(36,73){$\mathcal L_Z$}
\qbezier(48,45)(52,32)(65,30)\qbezier(48,45)(52,48)(70,45)
\qbezier(65,30)(70,18)(85,15)\qbezier(65,30)(70,35)(85,30)\put(85,15){\vector(2,-1){1}}
\qbezier(70,45)(70,40)(85,30)\qbezier(70,45)(77,50)(90,50)
\put(92,50){\circle{4}}
\qbezier(85,30)(90,22)(100,17)\put(85,15){\vector(2,-1){1}}\put(100,17){\vector(2,-1){1}}
\qbezier(85,30)(90,35)(108,35)\put(110,35){\circle{4}}
\put(112,35){\vector(2,1){10}}\put(112,35){\vector(2,-1){10}}
\qbezier(93,48)(97,44)(108,37)
\qbezier(93,52)(100,55)(115,55)\put(117,55){\circle{4}}
\put(119,55){\vector(2,1){10}}\put(119,55){\vector(2,-1){10}}
\qbezier(90,51)(75,51)(62,67)\qbezier(62,67)(80,77)(90,75)\put(90,75){\vector(1,0){1}}
\qbezier(90,75)(100,60)(115,56)\qbezier(90,75)(100,80)(115,76)\put(115,76){\vector(2,-1){1}}\put(120,74){$\cdots$}
\put(105,105){$\mathcal L_{D_1}\mathcal L_{D_2}=\mathcal L_{D_2}\mathcal L_{D_1}$}
\put(60,0){Figure 2b)}
\end{picture}\\
We omit the proof which rests on Lemma~\ref{l2.2} and also the mention of the separated "noncontrability" submodules $\mathcal R\subset\Omega$ consisting of strings.
\subsection{Isospectral solutions and solitons}\label{ssec6.5} 
We briefly reinterpret the calculation \cite{T12} of the KdV--hierarchy in order to point out some general aspects. The original problem reads: let
\begin{equation}\label{e6.4}v_{xx}+(\lambda+q(x,t))v=0\quad (\lambda\in\mathbb R, v=v(x,t))
\end{equation}
be the eigenvalue problem depending on a~parameter $t$ and our task is to determine such evolution equations
\begin{equation}\label{e6.5}v_t=P\quad (P=A(\lambda;\cdot\cdot,q_r,\cdot\cdot)v+B(\lambda;\cdot\cdot,q_r,\cdot\cdot)v_x)\quad (q_r=\frac{\partial^rq}{\partial x^r})
\end{equation}
that the compatibility conditions of the system (\ref{e6.4}, \ref{e6.5}) are of the special kind
\begin{equation}\label{e6.6}q_t=Q(\cdot\cdot,q_r,\cdot\cdot).
\end{equation}
The eigenvalue $\lambda$ is preserved and does not affect the evolution of the function~$q.$

The reinterpretation is as follows. We start with the \emph{ordinary} differential equation
\[\frac{d^2v}{dx^2}+(\lambda+q)v=0\quad (\lambda\in\mathbb R, v=v(x),q=q(x))\]
with two unknown functions. In terms of diffieties, we have space $\mathbf M$ with coordinates $x,\lambda, v, v_x,q_0,q_1,\ldots\,$ and the module $\Omega$ with the basis
\[\text{d}\lambda, \alpha=\text{d}v-v_x\text{d}x, \alpha_x=\text{d}v_x+(\lambda+q_0)v\text{d}x, \beta_r=\text{d}q_r
-q_{r+1}\text{d}x\ (r=0,1,\ldots\,).\]
Our task is to determine variations $Z$ such that
\begin{equation}\label{e6.7}Zx=0\ (Z=\frak Z \text{ is the evolutional operator}),\ Z\lambda=0,\ Zq=Q(\cdot\cdot,q_r,\cdot\cdot).\end{equation}
Roughly saying, function $q$ is "autonomous" in the evolution. Quite general variations $Z$ can be determined with the use of the standard basis
\[\pi_0=\alpha, \pi_1=\mathcal L_D\pi_0=\alpha_1, \pi_2=\mathcal L_D\pi_1=-(\lambda+q)\alpha-v(\text{d}\lambda+\beta_0), \ldots\] where
\[D=\frac{\partial}{\partial x}+v_x\frac{\partial}{\partial v}-(\lambda+q)\frac{\partial}{\partial v_x}+\sum q_{r+1}\frac{\partial}{\partial q_r}\] is the total derivative. Assuming moreover $Zx=Z\lambda=0,$ we obtain
\[Z=\sum D^rP\frac{\partial}{\partial \pi_r}\quad (P=\pi_0(Z)=\alpha(Z)=Zv)\]
where $P$ may be an~arbitrary function. Then
\[D^2P=\pi_2(Z)=-(\lambda+q)Zv-vZq=-(\lambda+q)P-vQ\]
and the last requirement (\ref{e6.7}) is expressed by
\begin{equation}\label{e6.8}Q(\cdot\cdot,q_r,\cdot\cdot)=-\frac{1}{v}(D^2P+(\lambda+q)P) \end{equation}
which provides very strong additional conditions on the function $P.$ First of all, we infer that
\[P_{vv}=P_{vv_x}=P_{v_xv_x}=0\]
by looking at the higher--order term $D^2P$ in (\ref{e6.8}). So we may suppose
\[P=A(\lambda;\cdot\cdot,q_r,\cdot\cdot)v+B(\lambda;\cdot\cdot,q_r,\cdot\cdot)v_x+C(\lambda;\cdot\cdot,q_r,\cdot\cdot).\] Substitution into (\ref{e6.8}) then yields
\[2\mathcal DA+\mathcal D^2B=0,\ C=0\]
and finally
\begin{equation}\label{e6.9}Q(\cdot\cdot,q_r,\cdot\cdot)=\frac{1}{2}\mathcal D^3B+2(\lambda+q_0)\mathcal DB+q_1B\quad (\mathcal D=\sum q_{r+1}\frac{\partial}{\partial q_r}).
\end{equation}
It is not easy to discuss this equation in full generality, however, there are particular solutions
\[B=B_0(\cdot\cdot,q_r,\cdot\cdot)\lambda^n+\cdots +B_{n-1}(\cdot\cdot,q_r,\cdot\cdot)\lambda+B_n(\cdot\cdot,q_r,\cdot\cdot)\] for any $n=0,1,\ldots\,.$ We recall the final result \cite{T12}
\[B_0=1, B_1=-\frac{1}{2}q_0, B_2=\frac{1}{8}(q_2+3q_0^2), \ldots\]
which moreover provides the famous KdV--hierarchy
\[Zq=Q=(q_1=)\,q_x, (q_2+3q_0^2)_x, (q_4+5q_1^2+10qq_2+10q_0^3)_{xx}, \ldots\]
as the final result.

The reinterpretation of the problem indicates some delicate features of the theory which were not yet discussed in actual literature and incorporate the original problem in much broather context. Analogous approach can be applied to nonlinear and partial differential equations.

\end{document}